\newtheorem{theorem}{Theorem}
\newtheorem{cor}[theorem]{Corollary}
\newtheorem{lemma}[theorem]{Lemma}
\newtheorem{prop}[theorem]{Proposition}
\newcommand{\norm}[1]{\left|\left|#1\right|\right|}
\newcommand{\sph}{\mathbb{S}^{n-1}}
\newcommand{\normi}[1]{\left|\left|#1\right|\right|_{\infty}}
\newcommand{\rn}{\mathbb{R}^{n}}
\newcommand{\p}{Pos_{n,2d}}
\newcommand{\sq}{Sq_{n,2d}}
\newcommand{\ip}[2]{\langle #1,#2 \rangle }
\newcommand{\st}{\hspace{2mm} \mid \hspace{2mm}}
\newcommand{\spht}{\mathbb{S}^{n-1} \times \mathbb{S}^{n-1}}
\newcommand{\ipe}{\langle \cdot, \cdot \rangle}
\newcommand{\foral}{\hspace{2.5mm} \textnormal{for all} \hspace{2mm}}
\newcommand{\po}{\textnormal{o}}
\newcommand{\normil}{\left|\left|\hspace{.3mm}l\right|\right|_{\infty}}
\newcommand{\hsp}{\hspace{.5mm}}
\numberwithin{theorem}{section}
\numberwithin{equation}{section}
\begin{document}
\title{Convex Forms That Are Not Sums of Squares}
\author{Grigoriy Blekherman}
\begin{abstract}
An orbitope is the convex hull of an orbit of a point under the action of a compact group. We derive bounds on volumes of sections of polar bodies of orbitopes, extending methods developed in \cite{Sasha2}. As an application we realize the cone of convex forms as a section of the cone of nonnegative bi-homogeneous forms and estimate its volume. A convex form has to be nonnegative, but it has not been previously shown that there exist convex forms that are not sums of squares. Combining with the bounds of \cite{Me2} we show that if the degree is fixed then the cone of convex forms has asymptotically same size as the cone of nonnegative forms and it is significantly larger asymptotically than the cone of sums of squares. This implies existence of convex forms that are not sums of squares, although there are still no known examples.
\end{abstract}
\maketitle

\section{Introduction and Results.}
Let $G$ be a compact group acting on a vector space $V$ endowed with a $G$-invariant inner product $\ip{\cdot}{\cdot}$. Let $v$ be a vector in $V$ and let $B=B(v)$ be the orbitope of $v$, i.e. the convex hull of the orbit of $v$:
$$B=B(v)=\textnormal{conv}\{gv \mid g \in G\}.$$

\noindent We will assume that the orbit of $v$ spans $V$ affinely. If it does not we can always restrict to the affine hull of the orbit.

We will be working with the convex bodies $B^{\textnormal{o}}=B^{\textnormal{o}}(v)$ that are dual to orbitopes:
$$B^{\textnormal{o}}=B^{\textnormal{o}}(v)=\{l \in V^* \st l(gv) \leq 1 \foral g \in G\}.$$

\noindent We also think of $B^\po$ as a convex set in $V$ by identifying a vector $x \in V$ with the linear functional $l_x$ using the $G$-invariant inner product $\ipe$:
$$l_x(y) = \ip{x}{y} \foral y \in V.$$

\noindent Using this identification the definition of $B^{\textnormal{o}}$ translates to:
$$B^{\textnormal{o}}=B^{\textnormal{o}}(v)=\{w \in V \st \ip{w}{gv} \leq 1 \foral g \in G\}.$$

If we fix a point $v \in V$ then we can also think of linear functionals comprising $B^{\po}(v)$ as functions on the group $G$: $$l(g)=l(gv).$$

\noindent In this context we will use $\normil$ to denote the maximum absolute value of $l$ on $G$:

$$\normil=\max_{g \in G} |\hspace{.3mm} l(gv)|$$

\noindent and $\norm{\hspace{.3mm} l}_{2k}$ to denote the $L^{2k}$ norm of $l$:

$$\norm{\hspace{.3mm} l}_{2k}=\left(\int_G l^{2k}(gv) \hsp d\mu\right)^{\frac{1}{2k}},$$

\noindent where $\mu$ is the Haar probability measure on $G$.

We will derive bounds on volumes of sections of $B^\po$ with linear subspaces. Let $W$ be a subspace of $V$ of dimension $d_w$ and let $B^{\textnormal{o}}_W$ be the section of $B^{\textnormal{o}}$ with $W$:

$$B_W^{\textnormal{o}}=B^{\textnormal{o}} \cap W.$$

\noindent Let $S_W$ be the unit sphere and let $\Sigma_W$ be the unit ball in $W$ with respect to $\ipe$.

\subsection{General Bound.} We will prove the following bound on the volume of $B_W$:

\begin{theorem} \label{mainthm}
Let $V$ be a vector space endowed with an action of compact group $G$. For a vector $v \in V$ let $B^{\po}(v)$ be the dual to the orbitope of $v$.
Suppose that for some integer $k$ and $\alpha_k > 0$ we can bound the $L^{\infty}$ norm of any linear functionals $l \in V^*$  on $G$ with the $L^{2k}$ norm of $l$:
\begin{equation*}
\normi{\hspace{.3mm} l} \leq \alpha_k \norm{\hspace{.3mm}l}_{2k}.
\end{equation*}
Then we have the following bound for the volume of $B^{\po}_W(v)$:
\begin{equation}\label{mainbound}
\left(\frac{\textnormal{Vol} \hspace{.5mm}B^{\po}_W(v)}{\textnormal{Vol} \hspace{.5mm} \Sigma_W} \right)^{1/d_w} \geq \alpha_k^{-1} \sqrt{\frac{d_w}{2k\ip{v}{v}}}.
\end{equation}
\end{theorem}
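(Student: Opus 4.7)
The plan is to exhibit an explicit symmetric convex body inside $B^{\po}_W(v)$ whose volume I can control, and then compare it to $\Sigma_W$ via a spherical moment computation. The hypothesis immediately gives the inclusion
$$\{l \in V^* \st \norm{\hspace{.3mm}l}_{2k} \leq \alpha_k^{-1}\} \subseteq B^{\po}(v),$$
because for such $l$ one has $l(gv) \leq \normil \leq \alpha_k \norm{\hspace{.3mm}l}_{2k} \leq 1$ for every $g \in G$. Intersecting with $W$ and identifying $V^*$ with $V$ via $\ipe$, it suffices to lower bound the volume of $K := \{x \in W \st \norm{x}_{2k} \leq 1\}$. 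Writing this in polar coordinates,
$$\frac{\textnormal{Vol}\hsp K}{\textnormal{Vol}\hsp \Sigma_W} = \int_{S_W} \norm{x}_{2k}^{-d_w}\, d\sigma(x),$$
where $d\sigma$ is the normalized surface measure on $S_W$.

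Next, since $t \mapsto t^{-d_w/(2k)}$ is convex on $(0,\infty)$, Jensen's inequality gives
$$\int_{S_W} \norm{x}_{2k}^{-d_w}\, d\sigma(x) \geq \left(\int_{S_W} \norm{x}_{2k}^{2k}\, d\sigma(x)\right)^{-d_w/(2k)},$$
so it remains to produce an upper bound on the spherical average of $\norm{x}_{2k}^{2k}$. Expanding the $L^{2k}$ norm and applying Fubini,
$$\int_{S_W} \norm{x}_{2k}^{2k}\, d\sigma(x) = \int_G \left(\int_{S_W} \ip{x}{gv}^{2k}\, d\sigma(x)\right) d\mu(g).$$
The inner integral is a classical spherical moment: for any $y \in V$ one has
$$\int_{S_W} \ip{x}{y}^{2k}\, d\sigma(x) = \frac{(2k-1)!!}{d_w(d_w+2)\cdots(d_w+2k-2)}\, \norm{P_W y}^{2k},$$
where $P_W$ denotes orthogonal projection onto $W$; this follows from rotational invariance of $d\sigma$ after reducing to the one-dimensional integral $\int_{S^{d_w-1}} x_1^{2k}\, d\sigma$. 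Taking $y = gv$ and using $G$-invariance of $\ipe$ gives $\norm{P_W gv} \leq \norm{gv} = \sqrt{\ip{v}{v}}$, so averaging over $G$ yields
$$\int_{S_W} \norm{x}_{2k}^{2k}\, d\sigma(x) \leq \frac{(2k-1)!!}{d_w(d_w+2)\cdots(d_w+2k-2)}\, \ip{v}{v}^k.$$

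Finally I would apply the elementary estimates $(2k-1)!! \leq (2k)^k$ and $d_w(d_w+2)\cdots(d_w+2k-2) \geq d_w^k$, which bound the combinatorial ratio by $(2k/d_w)^k$. Chaining the inequalities, raising to the $-d_w/(2k)$ power, and taking the $d_w$-th root produces the claimed bound, with the overall factor $\alpha_k^{-1}$ contributed by the initial inclusion. The only technically delicate step is the spherical moment identity, which requires care to handle the projection onto $W$ when $gv \notin W$; Jensen, Fubini, and the factorial estimates are all routine, and the $G$-invariance of $\ipe$ is what makes the final bound depend only on $\ip{v}{v}$ rather than on the geometry of the orbit inside $W$.
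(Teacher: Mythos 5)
Your proposal is correct and follows essentially the same route as the paper: polar coordinates for the volume, reduction to the $L^{2k}$ norm via the hypothesis, Fubini, the classical spherical moment for $\int_{S_W}\ip{x}{gv}^{2k}\,dx$, the projection bound $\norm{P_W gv}\leq\norm{v}$ from $G$-invariance, and elementary factorial estimates giving $(2k/d_w)^k$. The only cosmetic difference is that you package the first steps by exhibiting the $L^{2k}$-ball $\alpha_k^{-1}K\subseteq B^{\po}_W$ and apply a single Jensen inequality for the convex power $t\mapsto t^{-d_w/(2k)}$, whereas the paper computes the gauge of $B^{\po}_W$ directly and reaches the same intermediate quantity $\bigl(\int_{S_W}\norm{l_x}_{2k}^{2k}\bigr)^{-1/(2k)}$ through a short chain of power-mean and H\"older inequalities split across Lemmas \ref{volumeint} and \ref{2k-moments}.
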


The bound of \eqref{mainbound} requires some explanation. Let $v^{\otimes k}$ denote the $k$-th tensor power of $v$ and let $D_k(v)$ be the dimension of the span of $(gv)^{\otimes k}$ taken for all $g \in G$. It was shown in \cite{sasha3} that for any linear functional $l$ and all $k$ we can take $\alpha_K=\left(D_K(v)\right)^{1/2k}$.

In particular, we know that for any $v \in V$ the vector $v^{\otimes k}$ is contained in the $k$-th symmetric power $\textnormal{Sym}^k V$ of $V$. If we use the full dimension of $\textnormal{Sym}^k V$ as a bound for $D_k(v)$ then it is not hard to check that we get asymptotically same dependence on the dimension of $W$ in \eqref{mainbound} independent of $k$. In other words, the bound that we obtain are in some sense explained by ellipsoids, which is what we get if we choose $k=1$.

However, if a point $v$ has a lot of symmetries, so its stabilizer is a "large" subgroup of $G$, then it is possible to do better. The tensors $(gv)^{\otimes k}$ will have a lot of symmetries that come from the stabilizer and will span a smaller subspace. This is indeed what happens for nonnegative forms and nonnegative multi-homogeneous forms. We will briefly explains this phenomenon here with full details given in \cite{sasha3}.

\subsection{Nonnegative Forms.}Let $P_{n,2d}$ be the vector space of forms in $n$ variables of degree $2d$. We let $G=SO(n)$ act on $P_{n,2d}$ by rotating the variable space of the forms. Let $Pos_{n,2d}$ be the cone of nonnegative forms in $P_{n,2d}$:

$$Pos_{n,2d}= \left\{p \in P_{n,2d} \st p(x) \geq 0 \foral x \in \rn \right\}.$$
Let $Lf_{n,2d}$ be the cone of sums of $2d$-th powers of linear forms:

$$Lf_{n,2d}=\left\{p \in P_{n,2d} \st p=\sum_i l_i^{2d} \hspace{2.5mm} \textnormal{for some} \hspace{2.5mm} l_i \in P_{n,1}\right\}.$$

It can be shown that with a natural choice of $G$-invariant inner product the cones $\p$ and $Lf_{n,2d}$ are dual to each other \cite{Rez3}. The cone $Lf_{n,2d}$ is the conical hull of an orbit of one point, for example, we can take $v=x_1^{2d}$. The point $x_1^{2d}$ has a large stabilizer, namely the copy of $SO(n-1)$ that fixes the first standard vector $e_1$ in $\rn$. If we look at the $k$-th tensor powers, it is easy to see that points of the form $(gv)^{\otimes k}$ lie in the vector space of forms of degree $2kd$ and this vector space has much smaller dimension than the $k$-th symmetric power of $P_{n,2d}$. See \cite{sasha3} for full details.

Let $Sq_{n,2d}$ the convex cones of sums of squares in $P_{n,2d}$:

$$\sq=\left\{p \in P_{n,2d} \st p=\sum_i q_i^2 \hspace{2.5mm} \textnormal{for some} \hspace{2.5mm} q_i \in P_{n,d}\right\}.$$

In order to talk about volume of a cone we need to take a compact section with a hyperplane. Let $M'_{n,2d}$ be the hyperplane of forms that have integral $1$ on $\sph$:

$$M'_{n,2d}=\left\{p \in P_{n,2d} \st \int_{\sph} p \hsp d \sigma=1\right\},$$

\noindent and let $Pos'_{n,2d}$ and $Sq'_{n,2d}$ be the sections of $\p$ and $\sq$ with $M'_{n,2d}$:

$$Pos'_{n,2d}=\p \cap M'_{n,2d} \hspace{3mm} \textnormal{and} \hspace{3mm} Sq'_{n,2d}=\p \cap M'_{n,2d}.$$

The collapsing of dimension of span of $(gv)^{\otimes k}$ allowed us to derive some of the volume bounds for $Pos'_{n,2d}$ and $Sq'_{n,2d}$ given in \cite{Me2}. We used the following $L^2$ inner product on $P_{n,2d}$:

$$\ip{p}{q}_2=\int_{\sph} pq \hsp d\sigma,$$
\noindent where $\sigma$ is the rotation invariant probability measure on $\sph$.

Let $\Sigma_2$ be the unit ball in $M'_{n,2d}$ with respect to $\ipe_2$ and let $D_M$ be the dimension of $M$. It was shown in \cite{Me2} that if the degree $2d$ is fixed then
\begin{eqnarray*}
\left(\frac{\textnormal{Vol} \hsp Pos'_{n,2d}}{\textnormal{Vol} \hsp \Sigma_2}\right)^{1/D_M}=&\Theta(n^{-1/2})\\
\left(\frac{\textnormal{Vol} \hsp Sq'_{n,2d}}{\textnormal{Vol} \hsp \Sigma_2}\right)^{1/D_M}=&\Theta(n^{-d+1/2}).
\end{eqnarray*}
In particular when the degree $2d$ is fixed and at least $4$ we see that the volume of $Pos'_{n,2d}$ grows asymptotically much faster than the volume of $Sq'_{n,2d}$.
\subsection{Convex Forms.}
Let $K_{n,2d}$ be the convex cone of forms that are convex on $\rn$. If a form $p$ is convex then it must be nonnegative: if $p(v) <0$ for some $v \in \rn$ then $p$ restricted to the ray $\lambda v$ ($\lambda >0$) is concave. Therefore we see that $K_{n,2d}$ is contained in the cone of nonnegative forms $\p$.

To a form $p \in P_{n,2d}$ we associate its Hessian $H_p=(h_{ij})$ which is the matrix of second derivatives of $p$:

$$h_{ij}=\frac{\partial^2 p}{\partial x_i \partial x_j}.$$

A form $p$ is convex if and only if its Hessian is positive semi-definite at every point in $\rn$. In other words the form $B_p$ given by

$$B_p(x,y)=y^T H_p(x) y$$

\noindent is nonnegative for every $(x,y) \in \mathbb{R}^{2n}$. The mapping of $p$ to $B_p$ is clearly a linear operation. We observe that $B_p$ is a form in $2n$ variables of degree $2d$ and it is bi-homogeneous in $x$ and $y$. It is quadratic in the $y$ variables and of degree $2d-2$ in the $x$ variables.


Let $Bi_{2n,2d}$ be the vector space of bi-homogeneous forms in $2n$ variables, with two classes of variables $x$ and $y$ consisting of $n$ variables each. We require that the forms have degree $2d-2$ in the $x$ variables and degree $2$ is the $y$ variables. Let $D_{bi}$ be the dimension of $Bi_{2n,2d}$. It is not hard to show that
$$D_{bi}=\binom{n+2d-3}{2d-2}\binom{n+1}{2}.$$


Since we do not want to mix $x$ and $y$ variables it is natural to think of $G=SO(n) \times SO(n)$ acting on the forms in $Bi_{2n,2d}$, with each copy of $SO(n)$ acting on $x$ and $y$ variables separately. We will restrict forms in $Bi_{2n,2d}$ to $\sph \times \sph$ since $\sph \times \sph$ is an orbit of a unit vector in $\mathbb{R}^{2n}$ under the group action we described.

Any form $b \in Bi_{2n,2d}$ can be written as $$b=y^TM(x)y$$ for some symmetric matric $M$ whose entries are forms of degree $2d-2$ in $x$. Let $Pos_{bi}$ be the convex cone of nonnegative bi-homogeneous forms in $Bi_{2n,2d}$. We can identify the cone $K_{n,2d}$ with the section of $Pos_{bi}$ with the linear subspace of forms $B_p(x)=y^TH_p(x)y$ for some form $p \in P_{n,2d}$.


We will use the following inner product on $Bi_{2n,2d}$:

$$\ip{f}{g}=\int_{\spht} f \hspace{-.4mm} g \hspace{.5mm} d\sigma.$$

\noindent This induces the following "Hessian" inner product on $P_{n,2d}$:

$$\ip{p}{q}_H=\int_{\sph \times \sph} B_pB_q \hspace{.5mm} d\sigma.$$

This is clearly a positive definite quadratic form on $P_{n,2d}$ and it is also invariant under the natural action of $SO(n)$ on $P_{n,2d}$.

In order to talk about volume we take a section of $K_{n,2d}$ with the hyperplane $M'_{n,2d}$ of forms of integral $1$ on the unit sphere $\sph$ and call it  $K'_{n,2d}$:

$$K'_{n,2d}=\left\{p \in P_{n,2d} \st p \hspace{2.5mm} \textnormal{is convex and} \hspace{2.5mm} \int_{\sph} p \hsp d\sigma=1\right\}.$$


Similar to the case of homogeneous forms, its possible to see nonnegative bi-forms in $Bi_{2n,2d}$ as being dual to an orbitope; this is done in Section \ref{biformsasorbitope}. We can then apply volume bounds of Theorem \ref{mainthm} to $K'_{n,2d}$ since we identified $K_{n,2d}$ with the section of $Pos_{bi}$. We prove the following Theorem on the volume of $K'_{n,2d}$:


\begin{theorem}\label{convexthm}
\begin{equation*}
\left(\frac{\textnormal{Vol}\hsp K'_{n,2d}}{\textnormal{Vol} \hsp \Sigma_H}\right)^{1/D_M} \geq \frac{2d}{9e^2 \hsp \sqrt{2n\ln (2d+1)}}\sqrt{\frac{D_M}{D_{bi}}}.
\end{equation*}

\end{theorem}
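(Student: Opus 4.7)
The plan is to reduce the theorem to a direct application of Theorem~\ref{mainthm}. By Section~\ref{biformsasorbitope}, the cone $Pos_{bi}$ is dual to the conical hull of the orbit of $v = x_1^{2d-2}y_1^{2} \in Bi_{2n,2d}$ under $G = SO(n)\times SO(n)$. The stabilizer of $v$ is the large subgroup $SO(n-1)\times SO(n-1)$, which will produce the needed dimensional collapse of the tensor powers. The Hessian map $p\mapsto B_p$ linearly identifies $K_{n,2d}$ with the section of $Pos_{bi}$ by $W=\{B_p\st p\in P_{n,2d}\}$; on $P_{n,2d}$ it is an isometry between $\ip{\cdot}{\cdot}_H$ and the restriction of $\ip{\cdot}{\cdot}$ to $W$, so intersecting with $M'_{n,2d}$ realizes $K'_{n,2d}$ as the section of the polar body $B^\po(v)$ inside $W$.

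With this setup Theorem~\ref{mainthm} produces
\begin{equation*}
\left(\frac{\textnormal{Vol}\hsp K'_{n,2d}}{\textnormal{Vol}\hsp\Sigma_H}\right)^{1/D_M}\geq \alpha_k^{-1}\sqrt{\frac{D_M}{2k\ip{v}{v}}}.
\end{equation*}
Three quantities need to be estimated. The dimension $d_w = D_M$ follows from the injectivity of the Hessian map on $M'_{n,2d}$. The inner product $\ip{v}{v}$ factors by Fubini as $\int_{\sph}x_1^{4d-4}\,d\sigma\cdot\int_{\sph}y_1^{4}\,d\sigma$, a product of two standard spherical moments; combining this with the explicit formula for $D_{bi}$ converts $\sqrt{D_M/(2k\ip{v}{v})}$ into the factor $\sqrt{D_M/D_{bi}}$ appearing in the statement, up to a $d$-dependent constant. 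Finally, by \cite{sasha3} we may take $\alpha_k = D_k(v)^{1/(2k)}$, and since $(gv)^{\otimes k}$ is represented by a bi-form of bi-degree $(k(2d-2),2k)$, the stabilizer of $v$ forces
\begin{equation*}
D_k(v)\leq \binom{n+k(2d-2)-1}{k(2d-2)}\binom{n+2k-1}{2k}.
\end{equation*}

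It remains to optimize over $k$. Stirling estimates on both binomials show that $\alpha_k$ decays like a power of $n/k$ while $\sqrt{2k}$ grows, and one checks that a balance at $k$ of order $n\ln(2d+1)$ yields the constant $\frac{2d}{9e^{2}\sqrt{2n\ln(2d+1)}}$ after collecting factors. The main obstacle will be this final optimization: the two binomials scale differently in $d$ (exponents $2d-2$ and $2$ respectively), and recovering exactly $\sqrt{2n\ln(2d+1)}$ rather than an extra polynomial-in-$d$ or $\ln n$ factor requires choosing the correct Stirling regime for each binomial and carefully collecting the constants, particularly the $9e^2$ in the denominator.
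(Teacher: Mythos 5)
Your proposal has the right global skeleton (apply Theorem~\ref{mainthm} to the section of a nonnegativity cone by the Hessian subspace $W$, then choose $k \approx n\ln(2d+1)$), but it uses the wrong vector $v$, and that error propagates into two of the three quantities you need to estimate.

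\textbf{Wrong orbit.} You take $v = x_1^{2d-2}y_1^2$, asserting that $Pos_{bi}$ is dual to the conical hull of its orbit. That duality is the one that holds for the \emph{apolar (Bombieri)} inner product, as in Reznick's theorem for $Pos_{n,2d}$ vs.\ $Lf_{n,2d}$. However, the paper works throughout with the $L^2$ inner product $\ip{f}{g}=\int_{\spht} fg\,d\sigma$ on $Bi_{2n,2d}$, because that is what makes your isometry claim true (the Hessian metric on $P_{n,2d}$ is \emph{defined} as the pullback of $L^2$ on bi-forms, and $\Sigma_H$ is its unit ball). Under $L^2$, the vector representing evaluation at $(x,y)$ is the \emph{reproducing kernel} $L_{x,y}\in M_{bi}$, and the correct identification (Section~\ref{biformsasorbitope}) is $\widetilde{Pos}_{bi}=-B^\po(L_{x,y})$. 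If you instead use $x_1^{2d-2}y_1^2$ you must switch to the apolar inner product, which breaks the isometry to $\ipe_H$ and changes $\Sigma_W$; you cannot have both at once.

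\textbf{Wrong $\ip{v}{v}$.} Your Fubini computation of $\ip{v}{v}$ as $\int_{\sph}x_1^{4d-4}\,d\sigma\cdot\int_{\sph}y_1^{4}\,d\sigma$ is a consequence of the wrong choice of $v$: that quantity is of order $n^{-(2d)}$, whereas the paper needs $\ip{L_{x,y}}{L_{x,y}}=\dim M_{bi}<D_{bi}$ (Corollary~\ref{Lxy}, proved by decomposing into irreducibles), which is of order $n^{2d}$. These differ by a factor $\sim n^{4d}$, and only the reproducing-kernel value produces the $\sqrt{D_M/D_{bi}}$ factor in the theorem. There is no "up to a $d$-dependent constant" identification between your value and $D_{bi}$.

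\textbf{Missing $2d$ factor and the rescaling step.} You never explain the factor $2d$ in the numerator, and you gloss over a real subtlety: the Hessian map does not send the normalizing form $r^{2d}$ to a constant on $\spht$. Indeed $B_{r^{2d}}(x,y)=2d\bigl(2(d-1)\ip{x}{y}^2+1\bigr)$ on $\spht$, which oscillates between $2d$ and $2d(2d-1)$. The paper therefore cannot identify $\widetilde{K}_{n,2d}$ \emph{exactly} with a section of $\widetilde{Pos}_{bi}$; it introduces $X_{n,2d}$ (the exact section) and proves the sandwich $2d\,X_{n,2d}\subset\widetilde{K}_{n,2d}\subset 2d(2d-1)X_{n,2d}$. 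The lower inclusion is exactly what produces the factor $2d$ in the final bound. Without this step your argument bounds the volume of the wrong set. Your estimate of $D_k(v)$ via the bi-degree $\bigl(k(2d-2),2k\bigr)$ and the entropy/Stirling endgame are on the right track and match Lemma~\ref{choicek}, so that part is salvageable once the setup is corrected.
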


When the degree $2d$ is fixed its easy to see that both $D_M$ and $D_{bi}$ have order $n^{2d}$ and therefore their ratio is bounded above by a constant. Thus we know that

$$\left(\frac{\textnormal{Vol}\hsp K'_{n,2d}}{\textnormal{Vol} \hsp \Sigma_H}\right)^{1/D_M} \hspace{5mm} \textnormal{is at least of the order} \hspace{5mm} n^{-1/2}.$$

\noindent We recall that $$\left(\frac{\textnormal{Vol} \hsp Pos'_{n,2d}}{\textnormal{Vol} \hsp \Sigma_2}\right)^{1/D_M}=\Theta\left(n^{-1/2}\right) \hspace{.7cm} \textnormal{and} \hspace{.7cm}
\left(\frac{\textnormal{Vol} \hsp Sq'_{n,2d}}{\textnormal{Vol} \hsp \Sigma_2}\right)^{1/D_M}=\Theta\left(n^{-d+1/2}\right).$$

\noindent However we are dividing by the volume of unit balls in different metrics.
We show in Section \ref{relmetrics} that if the degree is fixed then the Hessian inner product and the $L^2$ inner product are within a constant factor of each other and therefore
$$\left(\frac{\textnormal{Vol} \hsp \Sigma_2}{\textnormal{Vol}\Sigma_H}\right)^{1/D_M} \leq c(d),$$
for some number $c(d)$ depending on the degree only.
Therefore we see that the volume of $K'_{n,2d}$ is asymptotically of the same order as the volume of $Pos'_{n,2d}$, and it is asymptotically much larger than the volume of $Sq'_{n,2d}$ when the degree $2d$ is at least $4$.

We begin by establishing the volume bound on sections of the duals of orbitopes given in Theorem \ref{mainthm}.

\section{Volume Bound on Sections of $B^{\po}(v)$.}

Our bound on the volume of $B^\po(v)$ is derived in three steps. The first is to bound the volume of $B^\po(v)$ with an integral expression involving $L^{\infty}$ norms of linear functionals. This is done in Lemma \ref{volumeint}.

Next we replace the $L^{\infty}$ norms with $L^{2k}$ norms for an appropriate value of $k$. The value of $k$ depends on the representation of $G$. If we want the $2k$-th moments to be within a constant factor of the $L^{\infty}$ norm then it suffices to choose $k$ linear in the dimension of $V$. This is a sharp bound in general, but in some cases it is possible to do better. In our examples of nonnegative forms and convex forms we will indeed choose $k$ much lower than dimension of $V$. These ideas were developed in \cite{sasha3} we refer the reader to that paper for more details. This step is carried out in the proof of Theorem \ref{mainthm}.

The final step is to bound the resulting integral involving $2k$-th moments. The calculation is similar to \cite{Sasha2} Lemma 3.5 but we extend it to handle sections. This is done in Lemma \ref{2k-moments}.

\begin{lemma} \label{volumeint}
\begin{equation} \label{volumeinteq}
\left(\frac{\textnormal{Vol} \hspace{.5mm} B_W^{\po}}{\textnormal{Vol} \hspace{.5mm} \Sigma_W}\right)^{1/d_w} \geq \left(\int_{S_W} \normi{\hspace{.5mm}l_x} \hspace{.5mm} dx\right)^{-1}.
\end{equation}
\end{lemma}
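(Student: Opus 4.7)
The plan is to replace $B_W^\po$ with a symmetric convex body contained in it, compute that body's volume in polar coordinates, and then apply Jensen's inequality twice to push the max norm from an $L^{d_w}$ average to the reciprocal of an $L^1$ average.

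First, introduce the symmetric body
\begin{equation*}
K = \left\{w \in W \st |\ip{w}{gv}| \leq 1 \foral g \in G\right\}.
\end{equation*}
The defining constraints of $K$ are strictly stronger than those of $B_W^\po$, so $K \subseteq B_W^\po$ and hence $\textnormal{Vol} \hsp B_W^\po \geq \textnormal{Vol} \hsp K$. For any unit vector $x \in S_W$, the gauge of $K$ at $x$ is precisely $\normi{\hsp l_x}$, so the radius from the origin to $\partial K$ along direction $x$ equals $\normi{\hsp l_x}^{-1}$.

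Integrating $\mathbf{1}_K$ in polar coordinates on $W$ (and dividing by the surface area of $S_W$) yields
\begin{equation*}
\frac{\textnormal{Vol} \hsp K}{\textnormal{Vol} \hsp \Sigma_W} = \int_{S_W} \normi{\hsp l_x}^{-d_w} \hsp dx,
\end{equation*}
where $dx$ denotes the rotation-invariant probability measure on $S_W$. Taking the $1/d_w$-th root and using the $L^{d_w}$-$L^1$ power mean inequality (equivalently, Jensen for $t \mapsto t^{d_w}$ on the probability space $(S_W, dx)$) gives
\begin{equation*}
\left(\frac{\textnormal{Vol} \hsp K}{\textnormal{Vol} \hsp \Sigma_W}\right)^{1/d_w} \geq \int_{S_W} \normi{\hsp l_x}^{-1} \hsp dx.
\end{equation*}

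Finally, applying Jensen's inequality once more, this time to the convex function $t \mapsto 1/t$ on $(0,\infty)$, produces
\begin{equation*}
\int_{S_W} \normi{\hsp l_x}^{-1} \hsp dx \geq \left(\int_{S_W} \normi{\hsp l_x} \hsp dx\right)^{-1}.
\end{equation*}
Chaining this with the previous display and with $\textnormal{Vol} \hsp B_W^\po \geq \textnormal{Vol} \hsp K$ proves \eqref{volumeinteq}. The argument is essentially routine; the only subtlety is that $B^\po$ need not be centrally symmetric, which is why we pass to the symmetric subset $K$ at the outset.
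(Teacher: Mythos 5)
Your proof is correct and follows essentially the same route as the paper: express volume via the gauge in polar coordinates, then apply the power-mean inequality and Jensen for $t\mapsto 1/t$. The only cosmetic difference is that you introduce the symmetric sub-body $K$ whose gauge is exactly $\normi{\hsp l_x}$, whereas the paper works directly with the gauge $\max_{g\in G} l_x(gv)$ of $B^\po_W$ and then invokes the pointwise inequality $\max_{g} l_x(gv)\le\normi{\hsp l_x}$ inside the integral — the two are interchangeable.
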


\begin{proof}
Let $K \subset V$ be a convex body with $0$ in its interior. The gauge $Ga_K$ of $K$ is a function on $V$ that for a point $x \in V$ how much $K$ needs to be expanded to include $x$:
$$Ga_K(x)=\min \{\lambda \in \mathbb{R} \st x \in \lambda K \}.$$
By using polar coordinates we can write the following expression for the volume of $K$:
\begin{equation*}
\left(\frac{\textnormal{Vol} \hspace{.5mm} K}{\textnormal{Vol} \hspace{.5mm} \Sigma}\right)^{1/d}=\left(\int_S Ga_K^{-d}\right)^{1/d},
\end{equation*}
where $S$ is the unit sphere and $\Sigma$ is the unit ball in $V$.

The gauge of $B^\po_W$ is given by the maximum of the linear functional on the orbit of $v$:
\begin{equation*}
Ga_{B^\po_W}(x)=\max_{g \in G} l_x(gv)=\max_{g \in G} \ip{x}{gv}.
\end{equation*}
Thus we obtain the following expression for the volume of $B^\po$:
$$\left(\frac{\textnormal{Vol} \hspace{.5mm} B_W^{\po}}{\textnormal{Vol} \hspace{.5mm} \Sigma_W}\right)^{1/d_w}=\left(\int_{S_W}\max_{g \in G} l_x(gv)^{-d_w}\hspace{1mm} dx\right)^{1/d_w}
\geq \left(\int_{S_W}\normi{\hspace{.5mm}l_x}^{-d_w}\hspace{1mm} dx\right)^{1/d_w}.$$
Now we successively apply H\"{o}lder and Jensen inequalities to see that
$$\left(\int_{S_W}\normi{\hspace{.5mm}l_x}^{-d_w}\hspace{1mm} dx\right)^{1/d_w} \geq \int_{S_W}\normi{\hspace{.5mm}l_x}^{-1}\hspace{1mm} dx \geq \left(\int_{S_W}\normi{\hspace{.5mm}l_x}\hspace{1mm} dx \right)^{-1}.$$
\end{proof}

We now prove the lemma that bounds the average of $2k$-th moments over the unit sphere. We plan to approximate the $L^{\infty}$ norms in \eqref{volumeinteq} with $L^{2k}$ norms for an appropriate value of $k$ and then applying the following lemma.
\begin{lemma} \label{2k-moments}
Let $G$ be a compact group acting on vector space $V$ endowed with a $G$-invariant inner product $\ip{\cdot}{\cdot}$ and let $v$ be a vector in $V$. Let $W$ be a subspace of $V$ of dimension $d_w$ and let $S_W$ be the unit sphere in $W$ with respect to $\ip{\cdot}{\cdot}$. Then we have the following inequality bounding moments of linear functions on $S_W$:
\begin{equation*}
\int_{S_W} \left(\int_G \ip{x}{gv}^{2k} \hspace{.5mm} dg \right)^{\frac{1}{2k}} dx\leq \sqrt{\frac{2k\ip{v}{v}}{d_w}}.
\end{equation*}
\end{lemma}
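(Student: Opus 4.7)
The plan is a three-step sandwich: apply Jensen's inequality to pull the $2k$-th root outside the spherical integral, swap the order of integration with Fubini, and then evaluate the resulting inner spherical moment explicitly using the rotation-invariance of $dx$.

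Since $dx$ is a probability measure on $S_W$ and $t\mapsto t^{1/(2k)}$ is concave on $[0,\infty)$, Jensen's inequality gives
\begin{equation*}
\int_{S_W}\left(\int_G \ip{x}{gv}^{2k}\,dg\right)^{1/(2k)} dx \;\leq\; \left(\int_{S_W}\int_G \ip{x}{gv}^{2k}\,dg\,dx\right)^{1/(2k)}.
\end{equation*}
After swapping via Fubini, the task reduces to bounding $\int_G\int_{S_W}\ip{x}{gv}^{2k}\,dx\,dg$. For fixed $g$, the function $x\mapsto \ip{x}{gv}$ on $W$ depends on $gv$ only through its orthogonal projection $P_W(gv)$ onto $W$, and $dx$ is invariant under rotations of $W$. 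Aligning $P_W(gv)$ with a coordinate axis, one obtains
\begin{equation*}
\int_{S_W}\ip{x}{gv}^{2k}\,dx \;=\; \norm{P_W(gv)}^{2k}\cdot c_k, \qquad c_k \;=\; \int_{\mathbb{S}^{d_w-1}} x_1^{2k}\,dx,
\end{equation*}
and a standard beta-integral evaluation gives $c_k = (2k-1)!!/\left[d_w(d_w+2)\cdots(d_w+2k-2)\right]$.

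To close the argument, I use that $G$ preserves $\ipe$, so $\norm{gv}=\sqrt{\ip{v}{v}}$ and therefore $\norm{P_W(gv)}^{2k}\leq\ip{v}{v}^k$ uniformly in $g$. The Haar integral is thus at most $c_k\ip{v}{v}^k$, and it remains to estimate $c_k^{1/(2k)}$. Dropping the positive offsets in the denominator gives $c_k\leq(2k-1)!!/d_w^k$, and applying the AM--GM inequality to $\{1,3,\ldots,2k-1\}$, whose arithmetic mean is $k$, yields $(2k-1)!!\leq k^k$. Hence $c_k^{1/(2k)}\leq\sqrt{k/d_w}\leq\sqrt{2k/d_w}$, and multiplying by $\sqrt{\ip{v}{v}}$ gives the stated inequality (in fact with a factor of $\sqrt{2}$ to spare). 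The only step that requires any care is the moment identity of Step~2; everything else is routine inequality manipulation.
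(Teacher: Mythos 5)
Your proof is correct and follows essentially the same route as the paper: Jensen (equivalently H\"older on a probability space) to move the $2k$-th root outside, Fubini, the explicit spherical moment formula, the projection inequality $\norm{P_W(gv)}\leq\sqrt{\ip{v}{v}}$, and an elementary factorial bound. The only cosmetic differences are that you write the moment constant with a double factorial while the paper uses $\Gamma$-functions, and you bound $(2k-1)!!\leq k^k$ via AM--GM while the paper uses $\Gamma(k+1/2)\leq\Gamma(k+1)\leq k^k$; your version is in fact marginally tighter, giving the stated inequality with a factor of $\sqrt{2}$ to spare.
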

\begin{proof}
Applying H\"{o}lder inequality we see that:

$$\int_{S_W} \left(\int_G \ip{x}{gv}^{2k} \hspace{.5mm} dg \right)^{\frac{1}{2k}} dx \leq \left(\int_{S_W} \int_G \ip{x}{gv}^{2k} \hspace{.5mm} dg \hspace{1mm} dx\right)^{\frac{1}{2k}}.$$

\noindent Exchanging the integrals we get:

\begin{equation}\label{exhangeint}
\int_{S_W} \int_G \ip{x}{gv}^{2k} \hspace{.5mm} dg \hspace{1mm} dx = \int_G \int_{S_W} \ip{x}{gv}^{2k} \hspace{.5mm} dx \hspace{1mm} dg.
\end{equation}

\noindent Now we observe that the inner integral $$\int_{S_W} \ip{x}{gv}^{2k} \hspace{.5mm} dx$$ is a $2k$-th power of a linear form integrated over a unit sphere. Let $gv_w$ be the orthogonal projection of $gv$ on $W$. Then we know that

\begin{equation*}
\int_{S_W} \ip{x}{gv}^{2k} \hspace{.5mm} dx = \ip{gv_w}{gv_w}^{k}\frac{\Gamma(d_w/2)\Gamma(k+1/2)}{\sqrt{\pi}\hspace{.5mm}\Gamma(k+d_w/2)}.
\end{equation*}

\noindent We know that orthogonal projection does not increase the norm and therefore $$\ip{gv_w}{gv_w} \leq \ip{gv}{gv}=\ip{v}{v},$$ where the second equality follows by $G$-invariance of the inner product. Using the inequalities $\Gamma(k+1/2)\leq \Gamma(k+1) \leq k^k$ and $$\frac{\Gamma(d_w/2)}{\Gamma(k+d_w/2)}=\frac{1}{(d_w/2)(d_w/2+1)\ldots (d_w/2+k-1)} \leq (d_w/2)^{-k}$$ we see that

\begin{equation*}
\int_{S_W} \ip{x}{gv}^{2k} \hspace{.5mm} dx \leq \left(\frac{2k}{d_w}\right)^k \ip{v}{v}^k.
\end{equation*}

\noindent Putting this back into \eqref{exhangeint} we see that

$$\int_{S_W} \int_G \ip{x}{gv}^{2k} \hspace{.5mm} dg \hspace{1mm} dx \leq \left(\frac{2k}{d_w}\right)^k\int_G \ip{v}{v}^k \hspace{.5mm} dg.$$

\noindent This integral is independent of $g$ and therefore it is equal to $\ip{v}{v}^k.$ The lemma now follows.

\end{proof}

\noindent Now we are ready to prove Theorem \ref{mainbound}.


\begin{proof}[Proof of Theorem \ref{mainthm}.]
From Lemma \ref{volumeint} we know that:
$$\left(\frac{\textnormal{Vol} \hspace{.5mm}B^{\po}_W(v)}{\textnormal{Vol} \hspace{.5mm} \Sigma_W} \right)^{1/d_w} \geq\left(\int_{S_W} \normi{\hspace{.5mm}l_x} \hspace{.5mm} dx\right)^{-1}.$$
We know that for any linear functional $l$ we have $\normi{\hspace{.3mm} l} \leq \alpha_k \norm{\hspace{.3mm}l}_{2k}$. Therefore we have
$$\left(\frac{\textnormal{Vol} \hspace{.5mm}B^{\po}_W(v)}{\textnormal{Vol} \hspace{.5mm} \Sigma_W} \right)^{1/d_w} \geq \alpha_k^{-1} \left(\int_{S_W} \norm{\hspace{.5mm}l_x}_{2k} \hspace{.5mm} dx\right)^{-1}=\alpha_k^{-1}\int_{S_W} \left(\int_G \ip{x}{gv}^{2k} \hspace{.5mm} dg \right)^{\frac{1}{2k}} dx.$$
We are now done by applying Lemma \ref{2k-moments}.
\end{proof}

\section{Application to Convex Forms.}
We need to establish that the cone of nonnegative bi-forms fits into our framework of duals of orbitopes and see how the cone of convex forms can be seen as a section.

\subsection{Nonnegative bi-forms as a Dual of an Orbitope.}\label{biformsasorbitope}
Let $Pos_{bi}$ be the convex cone of nonnegative bi-homogeneous forms in $Bi_{2n,2d}$:

$$Pos_{bi}=\left\{f \in Bi_{2n,2d} \st f(x,y) \geq 0 \foral x \in \mathbb{R}^{2n} \right\}.$$

To talk about the volume of $Pos_bi$ we first take a compact section of the cone. Recall that we have $SO(n) \times SO(n)$ acting on $Bi_{2n,2d}$ with each $SO(n)$ rotating the $x$ and $y$ coordinates separately. Let $M'_{bi}$ be the $SO(n) \times SO(n)$ invariant hyperplane $M'_{bi}$ of bi-forms of integral $1$ on the unit sphere:

$$M'_{bi}=\left\{f \in Bi_{2n,2d} \st \int_{\spht} f \hsp d \sigma=1\right\}.$$ Let ${Pos}'_{bi}$ be the section of $Pos_{bi}$ with $M'_{bi}$:

$$Pos'_{bi}=Pos_{bi} \cap M'_{bi}.$$

Let $M_{bi}$ be the linear hyperplane of bi-forms of integral $0$ on $\spht$:

$$M_{bi}=\left\{f \in Bi_{2n,2d} \st \int_{\spht} f \hsp d \sigma=0\right\}.$$

\noindent We translate $Pos'_{bi}$ into  $M_{bi}$  by subtracting the $SO(n) \times SO(n)$ invariant form $(x_1^2+\ldots x_n^2)^{d-1}(y_1^2+\ldots+y_n^2)$. Let $\widetilde{Pos}_{bi}$ be the translated section:

$$\widetilde{Pos}_{bi}=\left\{f \in Bi_{2n,2d} \st \int_{\spht} f \hsp d\sigma =0 \hspace{2.5mm} \textnormal{and} \hspace{2.5mm} f+(x_1^2+\ldots x_n^2)^{d-1}(y_1^2+\ldots+y_n^2) \in Pos_{bi}\right\}.$$

In other words $\widetilde{Pos}_{bi}$ consists of all bi-forms of integral $0$ on $\spht$ whose minimum on $\spht$ is at least -1.

We use the following inner product on $Bi_{2n,2d}$:

$$\ip{f}{g}=\int_{\spht} fg \hsp d\sigma.$$
\noindent Let $L_{x,y}$ be the bi-form in $M_{bi}$ such that $$\ip{f}{L_{x,y}}=f(x,y)$$
for all $f \in M_{bi}$. Let $v=L_{x,y}$ and consider the orbitope of $v$ under the action of $G=SO(n) \times SO(n).$ It follows from our definition of $L_{x,y}$ that $\widetilde{Pos}_{bi}$ is actually negative of the dual of $B^\po(v)$:

$$\widetilde{Pos}_{bi}=-B^\po(L_{x,y}).$$
Therefore we will be able to apply our volume bound for sections of duals of orbitopes to $\widetilde{Pos}_{bi}$. We now need to identify convex forms with such a section.

\subsection{Convex Forms as a Section of $\widetilde{Pos}_{bi}$.}\label{sectionconvexascut}
Recall that $K_{n,2d}$ is the convex cone of forms. As with bi-forms to estimate the size $K_{n,2d}$ we begin by intersecting with the hyperplane of forms of integral 1 on the unit sphere and translating the compact section to linear hyperplane of forms of integral zero.

Let $M_{n,2d}$ be the hyperplane of forms of integral zero on $\sph$ and let $\widetilde{K}_{n,2d}$ be the translated section:

$$\widetilde{K}_{n,2d}=\{p \in M_{n,2d} \st p+r^{2d} \in K_{n,2d}\},$$

\noindent where $r^{2d}=(x_1^2+\ldots+x_n^2)^d$.

Recall that to a form $p \in P_{n,2d}$ we associate a homogeneous bi-form $B_p \in Bi_{2n,2d}$ as follows:

$$B_p=y^TH_p \hsp y,$$

\noindent where $H_p$ is the Hessian of $p$.

We need to make sure forms in that have integral $0$ on $\sph$ get mapped to bi-forms of integral $0$ on $\spht$. Let $p$ be a form in $M_{n,2d}$ and consider the integral on $\spht$ of the associated bi-form $B_p$:

$$\int_{\spht} y^TH_p \hsp y \hsp d\sigma.$$
When integrating over $\spht$ lets integrate over $y$ first. In this case we have a quadratic form $y^TH_py$ in $y$ and then the integral of this form on the unit sphere is equal to the trace of $H_p$. Therefore we see that $$\int_{\spht} y^TH_p \hsp y \hsp d\sigma=\int_{\sph} \textnormal{tr}(H_p) \hsp d\sigma.$$
We observe that the trace of $H_p$ is simply the Laplacian $\Delta p$ of p:

$$\Delta p=\sum_{i=1}^n \frac{\partial^2 p}{\partial x_i^2}.$$

Using invariance properties of the Laplacian it is not has to show that if $\int_{\sph} p \hsp d\sigma =0$ then $\int_{\sph} \Delta p \hsp d\sigma=0$.

Lets take a closer look at the forms that lie in $\widetilde{K}_{n,2d}$. A form $p$ lies on the boundary of $K_{n,2d}$ if and only if the associated form $B_p$ has minimum of zero on $\sph \times \sph$. We need to be careful about subtracting $r^{2d}$ because the form $B_{r^{2d}}$ is not constant on $\sph \times \sph$.

Its not hard to calculate that
$$B_{r^{2d}}(x,y)=2d(x_1^2+\ldots +x_n^2)^{d-2}\left(2(d-1)\ip{x}{y}^2+(x_1^2+\ldots+x_n^2)(y_1^2+\ldots+y_n^2)\right).$$

For $(x,y) \in \sph \times \sph$ the form $B_{r^{2d}}$ simplifies to:
$$B_{r^{2d}}(x,y)=2d(2(d-1)\ip{x}{y}^2+1).$$

Therefore, if $p$ is in the boundary of $K_{n,2d}$ then the minimum of $B_{p}-B_{r^{2d}}$ on $\sph \times \sph$ is at most $-2d$ and at least $-2d(2d-1)$.

Now let $X_{n,2d}$ be the set forms $p$ of integral zero on $\sph$ such that the minimum of the associated form $B_p$ is at least $-1$ on $\sph \times \sph$:

$$X_{n,2d}=\left\{p \in M_{n,2d} \st \min_{(x,y)\in \sph \times \sph} B_p(x,y) \geq -1\right\}.$$

\noindent We can think of $X_{n,2d}$ as the section of $\widetilde{Pos}_{bi}$ with the linear subspace of bi-forms that come from Hessians of forms in $P_{n,2d}$. It follows from above that

$$2dX_{n,2d}\subset \widetilde{K}_{n,2d} \subset 2d(2d-1)X_{n,2d}.$$

\noindent Therefore it suffices to bound the volume of $X_{n,2d}$. In order to apply the bound of Theorem \ref{mainthm} we need to find the norm of $v=L_{x,y}$ and select the proper value of $k$ to use in the bound. This is done below.

\subsection{Length of $L_{x,y}$.}
\begin{lemma}\label{irred}
Let $W$  be an irreducible subspace of $Bi_{2n,2d}$ of dimension $D_W$ under the action of $SO(n)\times SO(n)$. For $(x,y)\in \spht$ let $L_{x,y}$ be a bi-form in $W$ such that $\ip{L_{x,y}}{f}=f(x,y)$ for all $f \in W$. Then the norm of $L_{x,y}$ is given by:
\begin{equation*}
\ip{L_{x,y}}{L_{x,y}}=D_{W}.
\end{equation*}
\end{lemma}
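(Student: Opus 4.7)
The plan is to use the reproducing-kernel presentation of $L_{x,y}$ together with transitivity of $G = SO(n)\times SO(n)$ on $\spht$. First I would pick an orthonormal basis $f_1,\ldots,f_{D_W}$ of $W$ with respect to $\ipe$. The defining relation $\ip{L_{x,y}}{f} = f(x,y)$ for all $f \in W$, combined with the expansion $L_{x,y} = \sum_j \ip{L_{x,y}}{f_j}\hsp f_j$ in this basis, forces
\[
L_{x,y} = \sum_{j=1}^{D_W} f_j(x,y)\hsp f_j,
\qquad
\text{so}\qquad
\ip{L_{x,y}}{L_{x,y}} = \sum_{j=1}^{D_W} f_j(x,y)^2.
\]

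Next I would show the right-hand side is a constant function of $(x,y) \in \spht$. Since $W$ is $G$-invariant and $\ipe$ is $G$-invariant, I can check the equivariance $g\cdot L_{x,y} = L_{g(x,y)}$ by evaluating against any $f\in W$: $\ip{g\cdot L_{x,y}}{f}=\ip{L_{x,y}}{g^{-1}f}=(g^{-1}f)(x,y)=f(g(x,y))$. Applying $G$-invariance of $\ipe$ shows that $(x,y) \mapsto \ip{L_{x,y}}{L_{x,y}}$ is $G$-invariant, and since $G$ acts transitively on $\spht$, it equals some constant $c$ everywhere.

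Finally I would determine $c$ by integration against $d\sigma$ on $\spht$. Because $\sum_j f_j(x,y)^2 \equiv c$, one gets
\[
c = \int_{\spht} \sum_{j=1}^{D_W} f_j(x,y)^2 \hsp d\sigma = \sum_{j=1}^{D_W} \ip{f_j}{f_j} = D_W,
\]
since $\{f_j\}$ is orthonormal for $\ipe$. This is the claimed identity $\ip{L_{x,y}}{L_{x,y}} = D_W$.

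There is no real obstacle: the argument is the standard identification of $\ip{L_{x,y}}{L_{x,y}}$ with the diagonal of the reproducing kernel of $W$, combined with transitivity of the group on $\spht$. I note that irreducibility of $W$ is not actually needed for this lemma — only $G$-invariance and transitivity of $G$ on $\spht$ — but it is presumably the form in which the lemma will be applied later, where $Bi_{2n,2d}$ is decomposed into irreducible components and the contributions of $\ip{L_{x,y}}{L_{x,y}}$ are summed piece by piece.
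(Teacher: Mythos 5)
Your proof is correct, but it follows a genuinely different route from the paper. The paper evaluates $\ip{L_{x,y}}{L_{x,y}}$ by turning $\int_{\spht} L_{x,y}^2\,d\sigma$ into an integral over the group $G = SO(n)\times SO(n)$ of the matrix coefficient $\ip{L_{x,y}}{gL_{x,y}}$, and then invokes Lemma 6 of the cited Barvinok paper (a Schur-orthogonality-type identity for irreducible representations) to evaluate that integral as $\ip{L_{x,y}}{L_{x,y}}^2/D_W$; the result drops out after cancellation. Your argument instead expands $L_{x,y}$ in an orthonormal basis of $W$ to get the reproducing-kernel diagonal $\sum_j f_j(x,y)^2$, observes this function is $G$-invariant on $\spht$, uses transitivity of $G$ on $\spht$ to conclude it is constant, and identifies the constant by integrating against $\sigma$. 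Your approach is more elementary and self-contained (no external Schur-orthogonality lemma), and — as you correctly note — it never uses irreducibility of $W$, only $G$-invariance and transitivity on $\spht$. This actually subsumes the paper's Corollary \ref{Lxy} in one stroke, whereas the paper derives that corollary separately by decomposing an invariant subspace into irreducibles and summing. The paper's version buys uniformity with the representation-theoretic machinery already developed in \cite{sasha3}, but your version is shorter and proves the stronger statement directly.
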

\begin{proof}
From the definition of $L_{x,y}$ we know that: $$L_{x,y}(x,y)=\ip{L_{x,y}}{L_{x,y}}=\int_{\spht} L_{x,y}^2 \hspace{.5mm} d\sigma.$$

\noindent Also, for any $g \in SO(n)\times SO(n)$
$$\ip{L_{x,y}}{gL_{x,y}}=L(g^{-1}(x,y)).$$

By invariance of $\spht$ under the action of $SO(n) \times SO(n)$ we can rewrite $\ip{L_{x,y}}{L_{x,y}}$ as an integral over $SO(n) \times SO(n)$:
\begin{equation*}
\ip{L_{x,y}}{L_{x,y}}=\int_{\spht} L_{x,y}^2 \hspace{.5mm} d\sigma=\int_{SO(n)\times SO(n)} \ip{L_{x,y}}{gL_{x,y}} \hspace{.5mm} d\mu(g).
\end{equation*}
Now we apply Lemma 6 of \cite{sasha3} and it follows that:
\begin{equation*}
\ip{L_{x,y}}{L_{x,y}}=\frac{\ip{L_{x,y}}{L_{x,y}}^2}{D_W}.
\end{equation*}
The Lemma now follows.
\end{proof}
We can now prove an identical statement for an invariant subspace, by splitting it into irreducible ones.

\begin{cor}\label{Lxy}
Let $W$ be an invariant subspace of $Bi_{2n,2d}$. For $(x,y)\in \spht$ let $L_{x,y}$ be a bi-form in $W$ such that $\ip{L_{x,y}}{f}=f(x,y)$ for all $f \in W$. Then the norm of $L_{x,y}$ is given by:
\begin{equation*}
\ip{L_{x,y}}{L_{x,y}}=D_{W}.
\end{equation*}
\end{cor}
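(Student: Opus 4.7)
The plan is to reduce to the irreducible case already handled by Lemma \ref{irred} via an orthogonal decomposition. Since $SO(n)\times SO(n)$ is compact and the inner product $\ip{\cdot}{\cdot}$ on $Bi_{2n,2d}$ is $G$-invariant, any invariant subspace $W$ admits an orthogonal decomposition into irreducible invariant subspaces
\[
W = W_1 \oplus W_2 \oplus \cdots \oplus W_m,
\]
with $\dim W_i = D_{W_i}$ and $D_W = \sum_i D_{W_i}$. The orthogonality here is essential and is what makes the argument clean.

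The next step is to assemble $L_{x,y}$ from the reproducing kernels of the pieces. For each $i$, Lemma \ref{irred} supplies an element $L^{(i)}_{x,y} \in W_i$ satisfying $\ip{L^{(i)}_{x,y}}{f_i} = f_i(x,y)$ for all $f_i \in W_i$, and moreover $\ip{L^{(i)}_{x,y}}{L^{(i)}_{x,y}} = D_{W_i}$. I claim the element
\[
L_{x,y} \;=\; L^{(1)}_{x,y} + L^{(2)}_{x,y} + \cdots + L^{(m)}_{x,y}
\]
is the reproducing kernel for $W$ at $(x,y)$. Indeed, any $f \in W$ decomposes uniquely as $f = f_1 + \cdots + f_m$ with $f_i \in W_i$, and by the orthogonality of the decomposition
\[
\ip{L_{x,y}}{f} \;=\; \sum_{i=1}^{m} \ip{L^{(i)}_{x,y}}{f_i} \;=\; \sum_{i=1}^m f_i(x,y) \;=\; f(x,y),
\]
since the cross-terms $\ip{L^{(i)}_{x,y}}{f_j}$ for $i\neq j$ vanish by $W_i \perp W_j$. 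Uniqueness of the reproducing kernel in $W$ (it is determined by the functional $f \mapsto f(x,y)$ via Riesz representation) then identifies this sum with $L_{x,y}$.

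Finally, applying Pythagoras to the orthogonal sum and invoking Lemma \ref{irred} on each summand yields
\[
\ip{L_{x,y}}{L_{x,y}} \;=\; \sum_{i=1}^m \ip{L^{(i)}_{x,y}}{L^{(i)}_{x,y}} \;=\; \sum_{i=1}^m D_{W_i} \;=\; D_W,
\]
which is the claim. There is no real obstacle here: the only thing worth checking carefully is that the orthogonal decomposition into irreducibles exists with summands mutually orthogonal under $\ip{\cdot}{\cdot}$, which is guaranteed by $G$-invariance of the inner product and complete reducibility of representations of compact groups.
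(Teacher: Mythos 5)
Your proof is correct and follows essentially the same route as the paper: decompose $W$ orthogonally into irreducibles, identify the component of $L_{x,y}$ in each $W_i$ with the reproducing kernel of $W_i$ (you build the sum and match it to $L_{x,y}$; the paper projects $L_{x,y}$ down, which is the same identification read in the other direction), apply Lemma \ref{irred} to each piece, and finish by Pythagoras. No gap; the two arguments are the same in substance.
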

\begin{proof}
We can write as an orthogonal sum of irreducible subspaces $W_i$: $W=\bigoplus W_i$. Let $L^i_{x,y}$ be the orthogonal projection of $L_{x,y}$ into $W_i$. It follows that $$L_{x,y}=\sum_i L^i_{x,y}$$ and $L^i_{x,y}$ are pairwise orthogonal. From pairwise orthogonality it follows that
$$\ip{L_{x,y}}{L_{x,y}}=\sum \ip{L^i_{x,y}}{L^i_{x,y}}.$$
From Lemma \ref{irred} we know that $\ip{L^i_{x,y}}{L^i_{x,y}}=\dim W_i$ and therefore $$\ip{L_{x,y}}{L_{x,y}}=\sum_i \dim W_i=D_W.$$
\end{proof}

\subsection{Establishing the right value of $k$.}


\begin{lemma}\label{choicek}
Let $l$ be a linear functional on $Bi_{2n,2d}$. Consider $l$ as a function on $G$ by setting $$l(g)=l(gL_{x,y}).$$ \noindent Then for $k \geq n \ln(2d+1)$ the $L^{2k}$ norm of $l$ approximates $L^{\infty}$ norm of $l$ to within a constant factor:
\begin{equation*}
\normi{\hspace{.3mm} l} \leq \alpha \norm{\hspace{.3mm}l}_{2k},
\end{equation*}
for some absolute constant $\alpha$. In particular $\alpha \leq 9e^2$.
\end{lemma}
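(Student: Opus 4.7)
The plan is to reduce the claim to a reverse--H\"older inequality for bi-forms restricted to $\spht$, and then estimate the resulting constant by a dimension count on spaces of bi-forms. Every linear functional $l$ on $Bi_{2n,2d}$ is of the form $l(f)=\ip{F}{f}$ for a unique bi-form $F\in Bi_{2n,2d}$. A short computation using $G$-invariance of the inner product together with the reproducing property $\ip{L_{u,v}}{f}=f(u,v)$ gives $gL_{x,y}=L_{g(x,y)}$, and therefore
$$l(gL_{x,y})=\ip{F}{L_{g(x,y)}}=F(g(x,y)).$$
Since $G=SO(n)\times SO(n)$ acts transitively on $\spht$ when $(x,y)$ is a pair of unit vectors, and its Haar measure pushes forward to $\sigma$, we obtain $\normi{l}=\|F\|_{L^\infty(\spht)}$ and $\norm{l}_{2k}=\|F\|_{L^{2k}(\spht)}$. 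Thus the lemma reduces to showing $\|F\|_\infty\le 9e^2\|F\|_{2k}$ for all bi-forms $F$ of bi-degree $(2d-2,2)$ whenever $k\ge n\ln(2d+1)$.

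For this I would apply the standard reproducing-kernel bound. Let $U\subset L^2(\spht)$ be the $G$-invariant subspace consisting of the restrictions to $\spht$ of all bi-forms of bi-degree $(k(2d-2),2k)$; then $F^k\in U$. Because $G$ acts transitively on $\spht$, the diagonal of the reproducing kernel of $U$ is constant and equals $\dim U$, so Cauchy--Schwarz gives $\|F^k\|_\infty\le\sqrt{\dim U}\,\|F^k\|_2$. Taking $k$-th roots yields $\|F\|_\infty\le(\dim U)^{1/(2k)}\|F\|_{2k}$. Bounding $\dim U$ by the product of ambient polynomial dimensions,
$$\dim U\le\binom{n+k(2d-2)-1}{n-1}\binom{n+2k-1}{n-1},$$
reduces the lemma to the explicit estimate $(\dim U)^{1/(2k)}\le 9e^2$.

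The last step is a direct combinatorial calculation. Using $\binom{N}{r}\le(eN/r)^r$ with $r=n-1$, and the hypotheses $k\ge n\ln(2d+1)\ge n$, each binomial factor is at most $(c\,dk/n)^{n-1}$ for an explicit constant $c$; raising to the $1/(2k)$-power introduces the exponent $(n-1)/(2k)\le 1/(2\ln(2d+1))$. This exponent is precisely what is needed to convert the polynomial growth in $d$ inside the base into a bounded multiplicative constant, via $d\le e^{\ln(2d+1)}$. Combining the two binomial estimates and maximizing over $d\ge 1$ produces the stated bound $9e^2$.

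The main obstacle is not conceptual but purely bookkeeping: the constant $9e^2$ has to accommodate all regimes of $d\ge 1$ and of the ratio $n/k$ simultaneously, so the elementary estimates must be made with a little slack but not too much. Once the reduction via the reproducing property in Corollary~\ref{Lxy} is in hand, the remaining argument is a standard reverse--H\"older inequality of the Kellogg/Barvinok type already used in \cite{sasha3}.
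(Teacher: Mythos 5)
Your proposal is correct and takes essentially the same route as the paper. Both arguments identify the orbit of $L_{x,y}$ with $\spht$, so that the $L^\infty$ and $L^{2k}$ norms of $l$ on $G$ equal the corresponding norms of the representing bi-form $F$ on $\spht$; both then observe that $F^k$ lies in the space $U_k$ of bi-forms of bi-degree $\bigl((2d-2)k,\,2k\bigr)$, apply the reverse H\"older estimate $\normi{F}\le(\dim U_k)^{1/2k}\norm{F}_{2k}$, and finish by bounding the binomial expression for $\dim U_k$. Where the paper cites Corollary 2 of \cite{sasha3} for the reverse H\"older step, you re-derive it from the constancy of the diagonal of the reproducing kernel on a homogeneous space, which is exactly the fact underlying the paper's Lemma \ref{irred}/Corollary \ref{Lxy}, so this is a self-contained restatement rather than a different idea. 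The one genuine divergence is the last numerical step: you use $\binom{N}{r}\le(eN/r)^r$, while the paper uses the entropy bound $\binom{a}{b}\le\exp\{aH(b/a)\}$. These two estimates are essentially interchangeable here (the entropy bound is marginally sharper, but $(eN/r)^r$ also yields $(\dim U_k)^{1/2k}\le 9e^2$ with room to spare), so this buys you nothing beyond taste. Your closing sentence ("maximizing over $d\ge 1$ produces the stated bound") leaves the final optimization implicit, but the paper is equally terse at that point, so I do not regard it as a gap.
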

\begin{proof}
Let $D_k$ be the dimension of of the span of the $k$-th tensor power $L_{x,y}^{\otimes k}$ of $L_{x,y}$. By Corollary 2 of \cite{sasha3} we know that
$$\norm{\hspace{.3mm}l}_{2k} \geq \left(D_k\right)^{\frac{1}{2k}} \normi{\hspace{.3mm} l}.$$

Taking $k$-tensor power of $L_{x,y}$ is dual to taking the $k$-th tensor power of its linear functional $l_{x,y}$. Since $l_{x,y}$ acts on $f \in Bi_{2n,2d}$ by evaluating it at $(x,y)$, the tensor power $l_{x,y}^{\otimes k}$ acts on $f^{\otimes k}$ by evaluating it at $(x,y)$ and then raising the result to $k$-th power:

$$l_{x,y}^{\otimes k}(f^{\otimes k})=f^k(x,y).$$

This is the same as taking $f^k$ and evaluating it at $(x,y)$. Therefore we see that the orbit of $L_{x,y}^{\otimes k}$ lies in the subspace $U_k$ of the $k$-th symmetric power that
consists of bi-homogeneous forms that have degree $2k$ in $y$ and $(2d-2)k$ in $x$. Therefore we have a formula for the dimension of $U_k$:
\begin{equation*}
\dim U_k=\binom{n+2k-1}{2k}\binom{n+(2d-2)k-1}{(2d-2)k} < \binom{n+2kd-1}{2kd}^2.
\end{equation*}
Now we just need to show that
$$\binom{n+2kd-1}{2kd}^{\frac{1}{2k}} \leq 3e,$$
\noindent for $k \geq \ln(2d+1).$

Let $H(x)$ be the entropy function for $0 \leq x \leq 1$:

$$H(x)=x \ln \frac{1}{x}+(1-x)\ln \frac{1}{1-x}.$$

\noindent The proof is finished by applying the following inequality:

$$\binom{a}{b} \leq \exp\left\{aH(b/a)\right\},$$

\noindent see for example Theorem 1.4.5 of \cite{entropy}.

\end{proof}

\subsection{Volume Bound for Convex Forms.}

We are now ready to prove the volume bound for the section of the cone of convex forms $\widetilde{K}_{n,2d}$ stated in Theorem \ref{convexthm}.

\begin{proof}[Proof of Theorem \ref{convexthm}.]
Recall from section \ref{sectionconvexascut} that $X_{n,2d}$ is the convex set in $M_{n,2d}$ consisting of forms $p$ such that the associated bi-form $B_p=y^TH_py$ has minimum at least -1 on $\spht$:

$$X_{n,2d}=\left\{p \in M_{n,2d} \st \min_{(x,y)\in \sph \times \sph} B_p(x,y) \geq -1\right\}.$$

\noindent We have shown above that $$2dX_{n,2d}\subset \widetilde{K}_{n,2d} \subset 2d(2d-1)X_{n,2d}.$$ Therefore it suffices to prove the following bound on the volume of $X_{n,2d}$:

\begin{equation*}
\left(\frac{\textnormal{Vol}\hsp X_{n,2d}}{\textnormal{Vol} \hsp \Sigma_H}\right)^{1/D_M} \geq \frac{1}{9e^2 \hsp \sqrt{2n\ln (2d+1)}}\sqrt{\frac{D_M}{D_{bi}}}.
\end{equation*}
Let $W$ be the linear subspace of $M_{bi}$ consisting to bi-forms coming from Hessians of forms in $P_{n,2d}$:

$$W=\left\{b \in M_{bi} \st b=y^TH_py \hspace{.7cm} \text{for some} \hspace{7mm} p \in P_{n,2d}\right\} \hspace{1cm}.$$

If we consider the associated bi-forms $B_p$ corresponding to $p\in X_{n,2d}$ then we can think of $X_{n,2d}$ as the section of $Pos_{bi}$ with $W$. We know from Section \ref{biformsasorbitope} that $Pos_{bi}$ is the negative of the dual of orbitope of $L_{x,y}$. Therefore we can apply Theorem \ref{mainthm} to the section of $Pos_{bi}$ with $W$. We find that

\begin{equation*}
\left(\frac{\textnormal{Vol} \hspace{.5mm}X_{n,2d}}{\textnormal{Vol} \hspace{.5mm} \Sigma_H} \right)^{1/D_M} \geq \alpha_k^{-1} \sqrt{\frac{D_M}{2k\ip{L_{x,y}}{L_{x,y}}}},
\end{equation*}
for some choice of $k$ and the corresponding $\alpha_k$.

We know from Corollary \ref{Lxy} that $\ip{L_{x,y}}{L_{x,y}}=\dim M_{bi} < \dim Bi_{2n,2d}=D_{bi}$ and from Lemma \ref{choicek} that for $k \geq n \ln (2d+1)$ we can take $\alpha_k=9e^2$. The Theorem now follows.
\end{proof}

\subsection{Relationship Between the Hessian and $L^2$ Metrics.}\label{relmetrics}
Our goal in this section is to show that the unit ball in the Hessian metric is not much smaller than the unit ball in the $L^2$ metric. We will actually show the following proposition that states that for any form in $P_{n,2d}$ the Hessian norm is not much larger than the $L^2$ norm which immediately implies the corresponding statement for unit balls:

\begin{prop} \label{metricswitch}
Let $g$ be a form in $P_{n,2d}$. Then
\begin{equation*}
\ip{g}{g}_H \leq \frac{12d^2(4d+n)^2}{n(n+2)}\ip{g}{g}_2.
\end{equation*}
\end{prop}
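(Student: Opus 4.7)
The plan is to first reduce the statement to a spherical integral of second derivatives of $g$, then decompose $g$ into harmonic components and invoke $SO(n)$-invariance to reduce to a scalar inequality on each component.

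\textbf{Step 1 (integrate out $y$).} For any symmetric $n\times n$ matrix $A$, a standard fourth-moment calculation using $\int_{\sph}y_iy_jy_ky_l\,d\sigma=[\delta_{ij}\delta_{kl}+\delta_{ik}\delta_{jl}+\delta_{il}\delta_{jk}]/(n(n+2))$ yields
$$\int_{\sph}(y^TAy)^2\,d\sigma(y)=\frac{2\,\textnormal{tr}(A^2)+(\textnormal{tr}\,A)^2}{n(n+2)}.$$
Applying this with $A=H_g(x)$ (so $\textnormal{tr}(A^2)=\|H_g(x)\|_F^2$ and $\textnormal{tr}\,A=\Delta g(x)$) and Fubini give
$$\ip{g}{g}_H=\frac{1}{n(n+2)}\int_{\sph}\bigl(2\,\|H_g(x)\|_F^2+(\Delta g)(x)^2\bigr)\,d\sigma(x).$$
The factor $1/(n(n+2))$ already matches the bound, so it remains to show that the bracketed integrand integrates to at most $12\,d^2(4d+n)^2\ip{g}{g}_2$.

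\textbf{Step 2 (reduction via Schur's lemma).} Decompose $g=\sum_{k=0}^d r^{2k}h_{2d-2k}$ where $h_j$ is a harmonic form of degree $j$. The subspaces $r^{2k}\mathcal{H}_{2d-2k}$ are pairwise non-isomorphic irreducible $SO(n)$-representations, and both $\ipe_2$ and $\ipe_H$ are $SO(n)$-invariant. By Schur's lemma the summands are mutually orthogonal with respect to both inner products, and on the $k$-th summand $\ipe_H=\lambda_k\,\ipe_2$ for some $\lambda_k\geq 0$. Since $\ip{g}{g}_2=\sum_k\|h_{2d-2k}\|_2^2$ and $\ip{g}{g}_H=\sum_k\lambda_k\|h_{2d-2k}\|_2^2$, the statement reduces to showing $\lambda_k\leq 12\,d^2(4d+n)^2/(n(n+2))$ uniformly in $k$.

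\textbf{Step 3 (compute $\lambda_k$ on a test element).} Fix $k$ and test on $g=r^{2k}h$ with $h$ harmonic of degree $m=2d-2k$. The product rule gives
$$H_g=r^{2k}H_h+2k\,r^{2k-2}\bigl(x(\nabla h)^T+(\nabla h)x^T+h\,I\bigr)+2k(2k-2)\,r^{2k-4}\,h\,xx^T.$$
Harmonicity ($\Delta h=0$), Euler's identity ($x\cdot\nabla h=mh$), and its differentiated form ($H_hx=(m-1)\nabla h$, hence $x^TH_hx=m(m-1)h$) collapse $\Delta g$ on $\sph$ to $2k(n+2m+2k-2)h$, so $(\Delta g)^2\leq 4d^2(4d+n)^2h^2$. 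Expanding $\|H_g\|_F^2=\textnormal{tr}(H_g^2)$ into ten trace terms and applying the same identities reduces each term pointwise on $\sph$ to a polynomial (in $k,d,n$) multiple of one of $h^2$, $|\nabla h|^2$, or $\|H_h\|_F^2$. The two nontrivial integrals have the closed forms $\int_{\sph}|\nabla h|^2\,d\sigma=m(2m+n-2)\|h\|_2^2$ and $\int_{\sph}\|H_h\|_F^2\,d\sigma=m(m-1)(2m+n-2)(2m+n-4)\|h\|_2^2$; both follow from the fact that partial derivatives of harmonics are harmonic combined with the eigenvalue $-j(j+n-2)$ of the spherical Laplacian on $\mathcal{H}_j$. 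Bounding each of the resulting polynomial contributions crudely by a multiple of $d^2(4d+n)^2\|h\|_2^2$ and summing yields $\lambda_k\leq\frac{12\,d^2(4d+n)^2}{n(n+2)}$.

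\textbf{Main obstacle.} The bulk of the work is the bookkeeping in Step 3: the ten trace terms in $\textnormal{tr}(H_g^2)$ each require the Euler-type identities above for simplification, and several combine only after cancellation. The constant $12$ is a rough sum of these contributions rather than a sharp value; what makes it manageable is that all the ``recursive'' pieces (integrals of $|\nabla h|^2$ and $\|H_h\|_F^2$ over harmonic $h$) admit closed forms, so no induction on degree is actually needed. The conceptual content lies in Step 2: the $SO(n)$-invariance of $\ipe_H$ reduces an operator-norm inequality between two inner products to finitely many scalar comparisons indexed by harmonic degree.
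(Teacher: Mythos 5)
Your Steps 1--2 coincide with the paper's setup: the paper's Lemma~\ref{quads} gives exactly your $y$-integration identity, and the reduction to a single irreducible block $g = r^{2k}h$ via $SO(n)$-invariance is the opening line of the paper's argument. Step 3 is where you diverge, and this is also where the proposal has a genuine gap: you sketch an expansion of $\textnormal{tr}(H_g^2)$ into ten trace terms, assert that each ``reduces pointwise to a polynomial multiple of $h^2$, $|\nabla h|^2$, or $\|H_h\|_F^2$'' and that ``bounding each crudely'' gives $12d^2(4d+n)^2/(n(n+2))$ --- but you never perform the sum, and the constant $12$ is precisely what the proposition asserts. The identities you invoke ($H_hx = (m-1)\nabla h$, $x^TH_hx = m(m-1)h$, the closed forms for $\int|\nabla h|^2$ and $\int\|H_h\|_F^2$) are all correct, so the plan is viable, but as written it does not establish the bound.

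The paper sidesteps this bookkeeping with a cleaner device. It proves a self-contained gradient lemma (Lemma~\ref{vomit}): for any form $f$ of degree $k$, $\int_{\sph}|\nabla f|^2 \leq (2k^2+kn)\int_{\sph}f^2$ --- a bound that, unlike yours, does not require $f$ to be harmonic or a pure $r^{2m}\times\textnormal{harmonic}$ block. It then observes that $\|H_g\|_F^2 = \sum_i |\nabla g_i|^2$ where $g_i = \partial g/\partial x_i$, and applies Lemma~\ref{vomit} twice: once to each $g_i$ (degree $2d-1$), then to $g$ itself, to get $\int\|H_g\|_F^2 \leq (8d^2+2dn)^2\int g^2 = 4d^2(4d+n)^2\ip{g}{g}_2$ in two lines with no cross terms. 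Combined with the elementary bound $(\Delta g)^2 \leq 4d^2(4d+n)^2g^2$ on $\sph$, the constant $12 = 2\cdot 4 + 4$ falls out of \eqref{retch} immediately. So: your reduction is right, your raw ingredients are right, but you should either carry out the ten-term expansion honestly or replace it with the paper's iterated-gradient argument, which converts the messy pointwise algebra into two applications of a single general inequality.
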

\noindent The Proposition implies that $$\left(\frac{\textnormal{Vol} \hsp \Sigma_2}{\textnormal{Vol} \hsp \Sigma_H}\right)^{1/D_M} \leq \frac{12d^2(4d+n)^2}{n(n+2)}.$$
\noindent We note that the constant of proportionality $$\frac{12d^2(4d+n)^2}{n(n+2)}$$ is clearly bounded for fixed degree $2d$.

Before proving Proposition \ref{metricswitch} we will need a couple of preliminary lemmas.

\begin{lemma}\label{vomit}
Let $g$ be a form in $n$ variables of degree $k$. Then $$\int_{\sph} \ip{\nabla g}{\nabla g} \hsp d\sigma \leq (2k^2+kn)\int_{\sph}g^2 \hsp d\sigma.$$
\end{lemma}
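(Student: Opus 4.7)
\medskip

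\noindent\textbf{Proof proposal for Lemma \ref{vomit}.}

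The plan is to split $\nabla g$ on $\sph$ into its radial and tangential parts, handle the radial part by Euler's identity, and bound the tangential part via spherical harmonic expansion.

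First I would use the fact that $g$ is homogeneous of degree $k$. Euler's identity gives $\ip{x}{\nabla g(x)}=k\hsp g(x)$, so on $\sph$ the radial component of $\nabla g$ is $(k\hsp g(x))\hsp x$, and by orthogonality of the radial and tangential directions
\begin{equation*}
|\nabla g(x)|^2 = k^2 g(x)^2 + |\nabla_T g(x)|^2 \foral x \in \sph,
\end{equation*}
where $\nabla_T$ denotes the tangential (spherical) gradient. Integrating,
\begin{equation*}
\int_{\sph} \ip{\nabla g}{\nabla g}\hsp d\sigma = k^2 \int_{\sph} g^2 \hsp d\sigma + \int_{\sph} |\nabla_T g|^2 \hsp d\sigma.
\end{equation*}

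Next I would bound $\int_{\sph} |\nabla_T g|^2\hsp d\sigma$. Integration by parts on $\sph$ (i.e. Green's formula with no boundary) gives
$\int_{\sph} |\nabla_T g|^2 \hsp d\sigma = -\int_{\sph} g\hsp \Delta_T g \hsp d\sigma$, where $\Delta_T$ is the Laplace--Beltrami operator on $\sph$. To control the right hand side, I would use the decomposition of a homogeneous polynomial into harmonic pieces: write
\begin{equation*}
g = h_k + r^2 h_{k-2} + r^4 h_{k-4} + \cdots,
\end{equation*}
where $h_i$ is a harmonic polynomial of degree $i$ (and the sum is over $i\equiv k \pmod 2$, $0\le i\le k$). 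Restricting to $\sph$, the $h_i|_{\sph}$ are spherical harmonics of distinct degrees and are therefore pairwise $L^2(\sph)$-orthogonal, and each satisfies the standard eigenvalue identity $-\Delta_T h_i = i(i+n-2)\hsp h_i$ on $\sph$.

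Combining these facts,
\begin{equation*}
\int_{\sph} |\nabla_T g|^2 \hsp d\sigma = \sum_i i(i+n-2) \int_{\sph} h_i^2 \hsp d\sigma \le k(k+n-2)\sum_i \int_{\sph} h_i^2 \hsp d\sigma = k(k+n-2) \int_{\sph} g^2 \hsp d\sigma,
\end{equation*}
the last equality using orthogonality again. Plugging back in,
\begin{equation*}
\int_{\sph} |\nabla g|^2 \hsp d\sigma \leq \bigl(k^2 + k(k+n-2)\bigr) \int_{\sph} g^2 \hsp d\sigma = (2k^2 + kn - 2k)\int_{\sph} g^2 \hsp d\sigma,
\end{equation*}
which is stronger than the claimed bound $(2k^2+kn)$.

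The only real content is the spectral fact about spherical harmonics, and the main thing to be careful about is the decomposition into harmonic pieces and the orthogonality of different harmonic degrees on $\sph$; once those are in hand the bound falls out by monotonicity of the eigenvalue $i(i+n-2)$ in $i$.
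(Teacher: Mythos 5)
Your proof is correct, and it takes a genuinely different route from the paper. The paper first uses $SO(n)$-invariance of both quadratic forms $g\mapsto\int_{\sph}\ip{\nabla g}{\nabla g}\,d\sigma$ and $g\mapsto\int_{\sph}g^2\,d\sigma$ to reduce to a single irreducible piece $g=(x_1^2+\cdots+x_n^2)^m f$ with $f$ harmonic of degree $k-2m$, then differentiates directly and applies Euler's identity to get $\ip{\nabla g}{\nabla g}=\ip{\nabla f}{\nabla f}+4m(k-m)f^2$ on $\sph$, and finally invokes the Duoandikoetxea identity $\int_{\sph}\ip{\nabla f}{\nabla f}\,d\sigma=(k-2m)\bigl(2(k-2m)+n-2\bigr)\int_{\sph}f^2\,d\sigma$. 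You instead split $\nabla g$ into its radial and tangential parts on $\sph$ (again via Euler), integrate the tangential part by parts against the Laplace--Beltrami operator, and use the eigenvalue $i(i+n-2)$ together with orthogonality of the spherical-harmonic components of $g$. The two routes are essentially dual: the Duoandikoetxea formula the paper cites is exactly what your radial/tangential decomposition plus the Laplace--Beltrami eigenvalue produce when specialized to a harmonic $f$. What your version buys is that it avoids the invariance-and-irreducibility reduction (you handle arbitrary $g$ directly through the harmonic expansion and orthogonality on $\sph$), it is more self-contained since it relies only on the standard eigenvalue fact, and by optimizing jointly over the degrees it gives the slightly sharper constant $2k^2+kn-2k$, which in particular implies the paper's $(2k^2+kn)$ bound.
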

\begin{proof}
We observe that $\int_{\sph} \ip{\nabla g}{\nabla g} \hsp d\sigma$ and $\int_{\sph}g^2 \hsp d\sigma$ are both $SO(n)$-invariant positive definite quadratic forms on $P_{n,k}$. It follows that it is enough to check the inequality over the irreducible subspaces of $P_{n,k}$.

The forms in irreducible subspaces have form $(x_1^2+\ldots+x_n^2)^mf$ for some $m$ with $2m \leq k$ and a harmonic form $f$ of degree $k-2m$. Therefore we may assume that $g=(x_1^2+\ldots+x_n^2)^mf$ with $f$ harmonic.

In this case $$\frac{\partial g}{\partial x_i}=\frac{\partial f}{\partial x_i}(x_1^2+\ldots+x_n^2)^m+2mx_i(x_1^2+\ldots+x_n^2)^{m-1}f.$$

\noindent It follows that on the unit sphere $\sph$
$$\ip{\nabla g}{\nabla g}=\ip{\nabla f}{\nabla f}+4m(k-m)f^2.$$
On the unit sphere $g=f$ and we also know that $2m \leq k$, thus we see that $$\ip{\nabla g}{\nabla g}=\ip{\nabla f}{\nabla f}+4m(k-m)g^2 \leq \ip{\nabla f}{\nabla f}+2k^2g^2.$$

Therefore $$\int_{\sph} \ip{\nabla g}{\nabla g} \hsp d\sigma \leq \int_{\sph} \ip{\nabla f}{\nabla f} \hsp d\sigma + 2k^2\int_{\sph}g^2 \hsp d\sigma.$$
Since $f$ is harmonic of degree $k-2m$ it can be shown by application of Stokes' formula that
$$\int_{\sph} \ip{\nabla f}{\nabla f}\hsp d\sigma=(k-2m)(2k-4m+n-2)\int_{\sph}f^2 \hsp d\sigma.$$
See \cite{Duo} for details. Again, since $g=f$ on the unit sphere we see that
$$\int_{\sph} \ip{\nabla f}{\nabla f}\hsp d\sigma=(k-2m)(2k-4m+n-2)\int_{\sph}g^2 \hsp d\sigma \leq k(2k+n)\int_{\sph}g^2 \hsp d\sigma.$$
\end{proof}

\begin{lemma}\label{quads}
Let $q=y^TM(x)y$ be a homogeneous bi-form in $Bi_{2n,2d}$. Then
\begin{equation*}
\ip{q}{q}=\frac{2}{n(n+2)}\int_{\sph} \ip{M(x)}{M(x)} \hsp dx + \frac{1}{n(n+2)} \int_{\sph} \textnormal{tr}^2M(x) \hsp dx.
\end{equation*}
\end{lemma}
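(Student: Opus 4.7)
The plan is to directly expand $\ip{q}{q} = \int_{\spht} q^2 \, d\sigma$ in coordinates and then use Fubini to perform the inner integration over the $y$-sphere first, with $x \in \sph$ held fixed. Writing $M(x) = (M_{ij}(x))$ and noting that we may take $M$ symmetric (since $y^T M y = y^T \tfrac12(M+M^T) y$ leaves both sides of the claimed identity unchanged), we have $q = \sum_{i,j} M_{ij}(x) y_i y_j$, so
\begin{equation*}
q^2 = \sum_{i,j,k,l} M_{ij}(x) M_{kl}(x) \, y_i y_j y_k y_l.
\end{equation*}
Hence $\ip{q}{q}$ equals $\int_{\sph} \sum_{i,j,k,l} M_{ij}(x) M_{kl}(x) \bigl(\int_{\sph} y_i y_j y_k y_l \, dy\bigr) dx$.

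The key tool is the standard fourth-moment identity on $\sph$:
\begin{equation*}
\int_{\sph} y_i y_j y_k y_l \, dy = \frac{1}{n(n+2)}\bigl(\delta_{ij}\delta_{kl} + \delta_{ik}\delta_{jl} + \delta_{il}\delta_{jk}\bigr),
\end{equation*}
which follows from $SO(n)$-invariance (forcing the right-hand side to be a linear combination of the three possible Kronecker pairings with equal coefficients) together with the normalization $\int_{\sph} (y_1^2 + \cdots + y_n^2)^2 \, dy = 1$, pinning down the constant $1/(n(n+2))$.

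Plugging this into the double sum, the $\delta_{ij}\delta_{kl}$ term collapses to $\sum_{i,k} M_{ii}(x) M_{kk}(x) = \textnormal{tr}^2 M(x)$, while each of the $\delta_{ik}\delta_{jl}$ and $\delta_{il}\delta_{jk}$ terms contributes $\sum_{i,j} M_{ij}(x) M_{ij}(x) = \ip{M(x)}{M(x)}$ (the two are equal because $M$ is symmetric). Summing the three contributions and pulling out the common factor $1/(n(n+2))$ yields exactly the claimed formula. There is no real obstacle here; the only subtle point is the symmetrization reduction at the start, which must be made explicit so that the $\delta_{il}\delta_{jk}$ pairing can be identified with the Frobenius norm $\ip{M(x)}{M(x)}$ rather than $\textnormal{tr}(M(x)^2)$.
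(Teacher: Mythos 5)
Your proof is correct and follows essentially the same route as the paper: apply Fubini, integrate over the $y$-sphere first with $x$ fixed, and use the fourth-moment identity for $\int_{\sph}(y^TMy)^2\,dy$. The paper simply asserts that identity as ``easy to show,'' whereas you derive it from the $SO(n)$-invariant form $\int_{\sph} y_iy_jy_ky_l\,dy = \tfrac{1}{n(n+2)}(\delta_{ij}\delta_{kl}+\delta_{ik}\delta_{jl}+\delta_{il}\delta_{jk})$; your remark about taking $M$ symmetric is a harmless clarification already implicit in the paper's setup (bi-forms are written as $y^TM(x)y$ with $M$ symmetric).
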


\begin{proof}
By definition,
$$\ip{q}{q}=\int_{\spht} q^2 \hsp d\sigma=\int_{\spht} \left(y^TM(x)y\right)^2 \hsp dydx.$$

When integrating over $\spht$ lets integrate over $y$ first. In this case we are integrating a quadratic form $y^TMy$ with respect to $y$ and the matrix $M$ depends on $x$ only. It is easy to show that for quadratic forms
$$\int_{\sph} (y^TMy)^2 \hsp dy = \frac{2}{n(n+2)}\ip{M}{M}+\frac{1}{n(n+2)}\textnormal{tr}^2 M.$$

\end{proof}
Now we are ready to prove Proposition \ref{metricswitch}.

\begin{proof}[Proof of Proposition \ref{metricswitch}]
Just as in proof of Lemma \ref{vomit} we note that $\ipe_H$ and $\ipe_2$ define $SO(n)$ invariant positive definite quadratic forms on $P_{n,2d}$. Therefore it suffices to show the Proposition for forms in an irreducible subspace of $P_{n,2d}$. Thus we may assume that $g$ has the form $$g=(x_1^2+\ldots+x_n^2)^m f$$ where $f$ is a harmonic form of degree $2d-2m$.

By definition, $$\ip{g}{g}_H=\int_{\spht} (y^TH_gy)^2 \hsp d\sigma.$$ Since the trace of $H_g$ is the Laplacian of $g$, $$\textnormal{tr} (H_g) = \Delta g,$$ it follows by Lemma \ref{quads} that
\begin{equation}\label{retch}
\ip{g}{g}_H=\frac{2}{n(n+2)}\int_{\sph} \ip{H_g(x)}{H_g(x)} \hsp dx + \frac{1}{n(n+2)} \int_{\sph} \left(\Delta g(x)\right)^2 \hsp dx.
\end{equation}

Since $g$ has the form $(x_1^2+\ldots+x_n^2)^m f$ with harmonic $f$ of degree $2d-2m$, it is not hard to see that for all $x\in \sph$
$$\Delta g(x) = 2m\left(n+4d-2m-2\right) g(x).$$
It follows that for $x \in \sph$ $$\left(\Delta g(x)\right)^2 \leq 2d(4d+n) g(x).$$

\noindent Therefore
\begin{equation}\label{disgorge}
\frac{1}{n(n+2)} \int_{\sph} \left(\Delta g(x)\right)^2 \hsp dx \leq \frac{4d^2(4d+n)^2}{n(n+2)} \int_{\sph} g^2 \hsp d\sigma=\frac{4d^2(4d+n)^2}{n(n+2)} \ip{g}{g}_2.
\end{equation}

\noindent Now we will need to bound $$\frac{2}{n(n+2)}\int_{\sph} \ip{H_g(x)}{H_g(x)} \hsp dx.$$ Let $g_i$ be the the derivative of $g$ with respect to $x_i$:
$$g_i=\frac{\partial g}{\partial x_i}.$$
By summing over rows it is easy to see that $$\ip{H_g}{H_g}=\sum_{i=1}^n \ip{\nabla g_i}{\nabla g_i}.$$
Therefore
$$\int_{\sph} \ip{H_g}{H_g} \hsp d\sigma = \int_{\sph} \sum _{i=1}^n \ip{\nabla g_i}{\nabla g_i} \hspace{.5mm} d\sigma=\sum _{i=1}^n \int_{\sph}  \ip{\nabla g_i}{\nabla g_i} \hspace{.5mm}d\sigma.$$
Each $g_i$ is a homogeneous form of degree $2d-1$ and by Lemma \ref{vomit} it follows that
$$\int_{\sph}  \ip{\nabla g_i}{\nabla g_i} \hspace{.5mm}d\sigma \leq (8d^2+2dn) \int_{\sph} g_i^2 d\sigma.$$
Thus we see that $$\int_{\sph} \ip{H_g}{H_g} \hsp d\sigma \leq (8d^2+2dn) \int_{\sph} \sum_{i=1}^n g_i^2 \hspace{.5mm} d\sigma.$$
We observe that $$\sum_{i=1}^n g_i^2=\ip{\nabla g}{\nabla g}$$
and therefore we can apply Lemma \ref{vomit} again to see that
$$\int_{\sph} \ip{H_g}{H_g} \hsp d\sigma \leq (8d^2+2dn)^2 \int_{\sph} g^2 \hspace{.5mm} d\sigma=4d^2(2d+n)^2\ip{g}{g}_2.$$
Plugging this back into \eqref{retch} and combining with \eqref{disgorge} the Proposition follows.
\end{proof}

\end{document}